\numberwithin{equation}{section}
\numberwithin{figure}{section}
\theoremstyle{plain}
\newtheorem{thm}{\protect\theoremname}
  \theoremstyle{remark}
  \newtheorem{rem}[thm]{\protect\remarkname}
  \theoremstyle{plain}
  \newtheorem{lem}[thm]{\protect\lemmaname}
\date{}
\newcommand{\ona}{\operatorname}
\newenvironment{conjbis}
{
  \edef\thmlabel{\thethm}
  \renewcommand{\thethm}{\thmlabel$'$}%
  \addtocounter{thm}{-1}%
   \begin{thm}}
  {\end{thm}}
  \providecommand{\lemmaname}{Lemma}
  \providecommand{\remarkname}{Remark}
\providecommand{\theoremname}{Theorem}
\begin{document}

\title{Signs of self-dual depth-zero supercuspidal representations}

\author{Manish Mishra}

\email{manish@iiserpune.ac.in}

\address{Department of Mathematics, Indian Institute of Science Education
and Research (IISER), Dr. Homi Bhabha Road, Pashan, Pune 411 008 INDIA}
\begin{abstract}
Let $G$ be a quasi-split tamely ramified connected reductive group
defined over a $p$-adic field $F$. We show that if $-1$ is in the
$F$-points of the absolute Weyl group of $G$, then self-dual supercuspidal
representations of $G(F)$ exist. Now assume further that $G$ is
unramified and that the center of $G$ is connected. Let $\pi$ be
a generic self-dual depth-zero regular supercuspidal representation
of $G(F)$. We show that the Frobenius\textendash Schur indicator
of $\pi$ is given by the sign by which a certain distinguished element
of the center of $G(F)$ of order two acts on $\pi$. 
\end{abstract}

\maketitle

\section{Introduction}

Let $\mathcal{G}$ be a group and let $(\tau,V)$ be an irreducible
representation of $\mathcal{G}$. If $\tau$ is self-dual, i.e., it
is isomorphic to its contragradient $\check{\tau}$, then there exists
a non-degenerate $\mathcal{G}$-invariant bilinear form $B:V\times V\rightarrow\mathbb{C}$
which is unique up to scalars. It is thus either symmetric or skew
symmetric. The \textit{sign} or the \textit{Frobenius\textendash Schur
indicator} $\ona{sgn}(\tau)$ of $\tau$ is defined to be $+1$ (resp.
$-1$) according as $B$ is symmetric (resp. skew-symmetric). When
$\mathcal{G}$ is a finite group, 
\[
\ona{sgn}(\tau)=\frac{1}{\mid\mathcal{G}\mid}\sum_{g\in\mathcal{G}}\omega_{\tau}(g^{2}),
\]
 where $\omega_{\tau}$ denotes the character of $\tau$. The expression
on the right hand side of the above equality is zero when $\tau$
is not self-dual. 

Now let $G$ be a connected reductive group defined over a local or
finite field $F$ and let $\pi$ be a smooth irreducible representation
of $G(F)$. When $\pi$ is self-dual and also generic, i.e., it admits
a Whittaker model, D. Prasad introduced the idea of studying the sign
by the action of certain order two element of the center $Z(F)$ of
$G(F)$ \cite{Pra98,Pra99}. See \cite[Conjecture 8.3]{GR10} for
a possible connection of this element with the Deligne-Langlands local
root number. 

Now let $G$ be unramified, $F$ be $p$-adic and let $\pi$ be an
irreducible regular depth-zero supercuspidal representation. Regular
depth-zero supercuspidal representations are the ones which arise
from certain Deligne-Lusztig cuspidal representations of finite reductive
groups. These representations were studied by DeBacker and Reeder
\cite{DeRe09}. Assuming further that $G$ has connected center and
that $\pi$ is self-dual generic, we show in Theorem \ref{thm:main}
that the sign of $\pi$ is given by the central character $\omega_{\pi}$
evaluated at a certain order two element $\epsilon$ of the center
of $G(F)$. 

The main idea in the proof of Theorem \ref{thm:main} is to reduce
the problem to a question about finite reductive groups and use Prasad's
result in that setting. In Prasad's result, we first observe that
the central element $\epsilon$ has an explicit description in terms
of the root data. The assumption of the genericity of $\pi$ is used
to ensure - by a result of DeBacker and Reeder - that the finite reductive
group in the inducing data of $\pi$ has the same root system as $G$.
We finally use Kaletha's description of regular depth-zero representations
to relate $\omega_{\pi}(\epsilon)$ to the analogous element of the
finite reductive group. 

When $G$ is quasi split and tamely ramified over $F$, we give in
Theorem \ref{thm:exist}, a sufficient condition for self-dual supercuspidal
representations of $G(F)$ to exist. We show that if $-1$ is in the
$F$-points of the absolute Weyl group of $G$, then self-dual supercuspidal
representations do exist. The proof uses Kaletha's description of
regular supercuspidal representations \cite{Kal2016} and Hakim-Murnaghan's
result about dual Yu-datum \cite{HM08}.

\section{Notations}

Let $\mathcal{G}$ be a reductive group over a local or finite field
$F$. The central character of a representation $\pi$ of $\mathcal{G}(F)$
will be denoted by $\omega_{\pi}$. The contragradient of $\pi$ will
be denoted by $\check{\pi}$. If $\pi$ is irreducible self-dual,
then its Frobenius\textendash Schur indicator will be denoted by $\ona{sgn}(\pi)$.
When $F$ is non-archimedean local, we write $\mathcal{B}(\mathcal{G},F)$
(resp. $\mathcal{B}^{\ona{red}}(\mathcal{G},F)$) to denote the Bruhat-Tits
building (resp. reduced Bruhat-Tits building) of $\mathcal{G}(F)$.
We follow the standard notations (as in \cite[Sec. 2]{Kal2016} for
instance) for parahoric subgroups of $\mathcal{G}(F)$ and their Moy-Prasad
filtrations.

\section{\label{sec:finite}finite reductive group }

Let $\mathtt{G}$ be a connected reductive group defined over a finite
field $\mathbb{F}_{q}$. We assume that center $\mathtt{Z}$ of $\mathtt{G}$
is connected. Let $\mathtt{B=TU}$ be an $\mathbb{F}_{q}$-Borel subgroup
of $\mathtt{G}$, where $\mathtt{U}$ is the unipotent radical of
$\mathtt{B}$ and $\mathtt{T}$ is an $\mathbb{F}_{q}$-maximal torus
of $\mathtt{G}$ contained in $\mathtt{B}$. We denote the adjoint
torus by $\mathtt{T}_{\ona{ad}}$. The character lattice of $\mathtt{T}$
(resp. $\mathtt{T}_{\ona{ad}}$) will be denoted by $X^{*}(\mathtt{T})$
(resp. $X^{*}(\mathtt{T}_{\ona{ad}})$). 
\begin{thm}
[Prasad]There exists an element $s_{0}$ in $\mathtt{T}(\mathbb{F}_{q})$
such that it operates by $-1$ on all the simple root spaces of $\mathtt{U}$.
Further, $t_{0}:=s_{0}^{2}$ belongs to $\mathtt{Z}(\mathbb{F}_{q})$
and $t_{0}$ acts on an irreducible, generic, self-dual representation
by $1$ iff the representation is orthogonal. 
\end{thm}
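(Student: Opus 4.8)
The first task is to produce $s_{0}$ explicitly from the root datum, and this is where connectedness of the center enters. Over $\bar{\mathbb{F}}_{q}$ the simple roots $\Delta$ form a basis of $X^{*}(\mathtt{T}_{\ona{ad}})$, so they identify $\mathtt{T}_{\ona{ad}}$ with $\mathbb{G}_{m}^{\Delta}$; I take the point $c$ all of whose coordinates equal $-1$, i.e. the unique point with $\alpha(c)=-1$ for every $\alpha\in\Delta$. Since $q$ is odd and Frobenius acts on $\mathbb{G}_{m}^{\Delta}$ by permuting coordinates (according to its action on $\Delta$) followed by the $q$-th power map, $c$ is Frobenius-fixed, hence $c\in\mathtt{T}_{\ona{ad}}(\mathbb{F}_{q})$. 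To lift $c$ to $\mathtt{T}(\mathbb{F}_{q})$ I use the exact sequence of $\mathbb{F}_{q}$-tori
\[
1\to\mathtt{Z}\to\mathtt{T}\to\mathtt{T}_{\ona{ad}}\to 1,
\]
whose connecting map lands in $H^{1}(\mathbb{F}_{q},\mathtt{Z})$. Because $\mathtt{Z}$ is a \emph{connected} group over a finite field, Lang's theorem gives $H^{1}(\mathbb{F}_{q},\mathtt{Z})=0$, so $\mathtt{T}(\mathbb{F}_{q})\to\mathtt{T}_{\ona{ad}}(\mathbb{F}_{q})$ is surjective and $c$ lifts to the desired $s_{0}\in\mathtt{T}(\mathbb{F}_{q})$ with $\alpha(s_{0})=-1$ for all simple $\alpha$, i.e. $s_{0}$ acts by $-1$ on every simple root space of $\mathtt{U}$. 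Evaluating an arbitrary root on $t_{0}=s_{0}^{2}$ gives $\alpha(t_{0})=\alpha(s_{0})^{2}=1$, so $t_{0}$ annihilates all roots and therefore lies in $\bigcap_{\alpha}\ker\alpha=\mathtt{Z}(\mathbb{F}_{q})$, proving the first two assertions.

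For the indicator statement I set $\sigma=\ona{Ad}(s_{0})$. This inner automorphism fixes $\mathtt{T}$ pointwise and sends each simple root subgroup $u_{\alpha}(x)\mapsto u_{\alpha}(-x)$, hence carries a fixed nondegenerate character $\psi$ of $\mathtt{U}(\mathbb{F}_{q})$ to its inverse $\bar{\psi}$; since $s_{0}^{2}=t_{0}$ is central, $\sigma$ is an involution of $\mathtt{G}(\mathbb{F}_{q})$. As $\sigma$ is inner we have $\pi^{\sigma}\cong\pi$, and self-duality gives $\pi\cong\check{\pi}$, so $\pi^{\sigma}\cong\check{\pi}$ and the Kawanaka--Matsuyama twisted Frobenius--Schur indicator
\[
\varepsilon_{\sigma}(\pi)=\frac{1}{\mid\mathtt{G}(\mathbb{F}_{q})\mid}\sum_{g\in\mathtt{G}(\mathbb{F}_{q})}\omega_{\pi}\bigl(g\,\sigma(g)\bigr)
\]
is defined and equals $\pm1$. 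A substitution in this sum, using $g\,\sigma(g)=g\,s_{0}\,g\,s_{0}^{-1}$ together with $s_{0}^{2}=t_{0}$ central, collapses it to $\omega_{\pi}(t_{0})^{-1}\sum_{k}\omega_{\pi}(k^{2})$, which yields the clean relation $\ona{sgn}(\pi)=\omega_{\pi}(t_{0})\,\varepsilon_{\sigma}(\pi)$. In particular $\omega_{\pi}(t_{0})\in\{\pm1\}$, and it remains only to show $\varepsilon_{\sigma}(\pi)=+1$.

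This last step is the crux and is where genericity is used, via Prasad's argument. I would exploit that $\sigma$ preserves the Borel $\mathtt{B}$ while interchanging $\psi$ and $\bar{\psi}$, in conjunction with the multiplicity-one property of the Gelfand--Graev representation: the model $\mathcal{W}(\check{\pi},\bar{\psi})$ is obtained from $\mathcal{W}(\pi,\psi)$ by $W\mapsto W\circ\sigma$, and uniqueness of the Whittaker functional pins down the resulting pairing (equivalently, the Bessel function) up to scalar and forces it to be invariant under the involution, hence symmetric, so that $\varepsilon_{\sigma}(\pi)=+1$. This is the finite-field analogue of the statement that generic representations are orthogonal for an MVW-type involution, and the genuine difficulty lies precisely here: verifying that the Whittaker-normalised twisted form is truly symmetric rather than merely nondegenerate. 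Granting it, combination with the reduction of the previous paragraph gives $\ona{sgn}(\pi)=\omega_{\pi}(t_{0})$, which is the assertion; equivalently, $\pi$ is orthogonal exactly when $t_{0}$ acts by $1$.
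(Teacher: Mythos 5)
Your construction of $s_{0}$ and $t_{0}$ is correct and is essentially the one the paper gives: the paper defines $s^{\prime}\in\mathtt{T}_{\ona{ad}}(\mathbb{F}_{q})$ by $\chi\mapsto(-1)^{\langle\chi,\check{\rho}\rangle}$, which is exactly your point $c$ (since $\langle\alpha,\check{\rho}\rangle=1$ for every simple root $\alpha$), and lifts it along $\mathtt{T}(\mathbb{F}_{q})\rightarrow\mathtt{T}_{\ona{ad}}(\mathbb{F}_{q})$, whose surjectivity comes from Lang's theorem applied to the connected group $\mathtt{Z}$ --- the same use of connectedness of the center as yours. Your observation that every root kills $t_{0}$, so that $t_{0}$ lies in the center and hence in $\mathtt{Z}(\mathbb{F}_{q})$, is also fine. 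Be aware, however, that the paper does not prove the Frobenius--Schur assertion at all: the theorem is quoted from Prasad, and only the explicit root-datum description of $s$ and $t$ is supplied before restating it as Theorem \ref{Thm:Prasad}.

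For the indicator statement your skeleton is the right one (it is Prasad's), and the substitution $k=gs_{0}$ yielding $\ona{sgn}(\pi)=\omega_{\pi}(t_{0})\,\varepsilon_{\sigma}(\pi)$ is correct. But the decisive claim $\varepsilon_{\sigma}(\pi)=+1$ is asserted rather than proved, and the reason you offer does not suffice: uniqueness of the Whittaker functional gives that the $\sigma$-twisted invariant bilinear form is unique up to scalar, hence symmetric or antisymmetric, but it cannot by itself determine \emph{which}. Deciding that sign is precisely the content of Prasad's theorem; in \cite{Pra98} it is settled by counting solutions of $g\sigma(g)=e$ and comparing with the Kawanaka--Matsuyama identity $\sum_{\pi}\varepsilon_{\sigma}(\pi)\dim\pi=\#\{g:g\sigma(g)=e\}$, together with the multiplicity-freeness of the Gelfand--Graev representation --- not by a formal invariance argument on the Whittaker model. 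You flag this yourself (``granting it''), so as written the hard half of the statement remains unproved; to complete the argument you must either carry out that count or, as the paper does, simply cite Prasad.
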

From the short exact sequence
\[
\xymatrix{1\ar@{->}[r] & \mathtt{Z}\ar@{->}[r] & \mathtt{T}\ar@{->}[r] & \mathtt{T}_{\ona{ad}}\ar@{->}[r] & 1}
,
\]
we get the long exact sequence
\[
\xymatrix{1\ar@{->}[r] & \mathtt{Z}(\mathbb{F}_{q})\ar@{->}[r] & \mathtt{T}(\mathbb{F}_{q})\ar@{->}[r] & \mathtt{T}_{\ona{ad}}(\mathbb{F}_{q})\ar@{->}[r] & \ona{H}^{1}(\Gamma,\mathtt{Z})\ar@{->}[r] & \cdots}
.
\]
 Since $\mathtt{Z}$ is connected, $\ona{H}^{1}(\Gamma,\mathtt{Z})$
is trivial by Lang's theorem. Therefore,
\[
\xymatrix{1\ar@{->}[r] & \mathtt{Z}(\mathbb{F}_{q})\ar@{->}[r] & \mathtt{T}(\mathbb{F}_{q})\ar@{->}[r] & \mathtt{T}_{\ona{ad}}(\mathbb{F}_{q})\ar@{->}[r] & 1}
\]
 is exact. Let $\check{\rho}$ denote half the sum of positive co-roots.
Let $s^{\prime}$ be the element of $\mathtt{T}_{\ona{ad}}(\mathbb{F}_{q})=\ona{Hom}(X^{*}(\mathtt{T}_{\ona{ad}}),\mathbb{G}_{m})(\mathbb{F}_{q})$
given by 
\[
\chi\in X^{*}(\mathtt{T}_{\ona{ad}})\mapsto(-1)^{\langle\chi,\check{\rho}\rangle}\in\mathbb{G}_{m}.
\]
 Let $s$ denote any pull back of $s^{\prime}$ in $\mathtt{T}(\mathbb{F}_{q})$.
Then $s$ operates by $-1$ on all simple root spaces of $\mathtt{U}$.
The element $t:=s^{2}\in\mathtt{Z}(\mathbb{F}_{q})$ has the description
\[
\chi\in X^{*}(\mathtt{T})\mapsto(-1)^{\langle\chi,2\check{\rho}\rangle}\in\mathbb{G}_{m}.
\]

We can thus rewrite the above Theorem as

\begin{conjbis}

\label{Thm:Prasad}Let $\pi$ be an irreducible generic representation
of $\mathtt{G}(\mathbb{F}_{q})$. Then $\ona{sgn}(\pi)=\omega_{\pi}(t)$. 

\end{conjbis}
\begin{rem}
The assumption in Theorem \ref{Thm:Prasad} that $\mathtt{Z}$ is
connected cannot be entirely dropped, as shown in the counter example
in \cite[Sec. 9]{Pra98}.
\end{rem}

\section{\label{sec:Regular}Regular depth-zero representations}

Let $G$ be an unramified connected reductive group defined over a
$p$-adic field $F$. Assume that the center $Z$ of $G$ is connected.
Let $\mathfrak{f}$ denote the residue field of $F$.

\subsection{\label{sub:Construction}Construction of regular depth-zero supercuspidal }

For the definition of \textit{regular} depth-zero supercuspidal representations
and the details of the construction in this section, see \cite[Sec. 3.2.3]{Kal2016}.
Let $S$ be an elliptic maximal torus of $G$ and let $\theta:S(F)\rightarrow\mathbb{C}^{\times}$
be a depth-zero character. Let $S(F)_{0}$ be the Iwahori subgroup
of $S(F)$. Assume that $\theta$ is \textit{regular}, i.e., the stabilizer
of $\theta\mid_{S(F)_{0}}$ in $N(S(F),G(F))/S(F)$ is trivial, where
$N(S(F),G(F))$ denotes the normalizer of $S(F)$ in $G(F)$. The
restriction of $\theta\mid_{S(F)_{0}}$ factors through a character
$\bar{\theta}$ of $S(F)_{0:0+}$. Let $x\in\mathcal{B}^{\ona{red}}(G,F)$
be the vertex associated to $S$. The group $G(F)_{0:0+}$ is the
$\mathfrak{f}$-points of a connected reductive $\mathfrak{f}$-group
$\mathtt{G}_{x}$ and $S(F)_{0:0+}$ is the $\mathfrak{f}$-points
of an elliptic maximal $\mathfrak{f}$-torus $\mathtt{S}^{\prime}$
of $\mathtt{G}_{x}$. Let $\kappa(S,\theta)$ denote the irreducible
cuspidal Deligne-Lusztig representation of $\mathtt{G}_{x}(\mathfrak{f})$
associated to the pair $(\mathtt{S}^{\prime},\bar{\theta})$. Denote
again by $\kappa(S,\theta)$ its inflation to $G(F)_{x,0}$. This
representation extends to a representation $\tilde{\kappa}(S,\theta)$
of $Z(F)G(F)_{x,0}=G(F)_{x}$. 
\begin{lem}
\cite[Lemma 3.17, 3.22]{Kal2016} \label{lem:kal}The representation
$\pi(S,\theta):=\ona{c-Ind}_{G(F)_{x}}^{G(F)}\tilde{\kappa}(S,\theta)$
is irreducible (and hence supercuspidal) and every regular depth-zero
supercuspidal representation is of this form. 
\end{lem}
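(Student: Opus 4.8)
The plan is to establish the two assertions separately: first that $\pi(S,\theta)$ is irreducible, and hence supercuspidal, and second that every regular depth-zero supercuspidal representation occurs in this way. The natural framework is the theory of compact induction from open subgroups compact modulo the center, combined with the depth-zero structure theory of Moy and Prasad and the cuspidality properties of Deligne-Lusztig representations.

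For irreducibility, set $K=G(F)_{x}=Z(F)G(F)_{x,0}$, an open subgroup of $G(F)$ that is compact modulo $Z(F)$. By the standard Mackey-type criterion for such induction, $\ona{c-Ind}_{K}^{G(F)}\tilde{\kappa}(S,\theta)$ is irreducible provided the intertwining set of $\tilde{\kappa}(S,\theta)$ is contained in $K$; that is, $\ona{Hom}_{K\cap{}^{g}K}\bigl(\tilde{\kappa}(S,\theta),{}^{g}\tilde{\kappa}(S,\theta)\bigr)=0$ for every $g\in G(F)\setminus K$. First I would reduce this, using the Moy-Prasad filtration and the fact that $\tilde{\kappa}(S,\theta)$ is inflated from the finite group $\mathtt{G}_{x}(\mathfrak{f})$, to a statement at the finite level: a nonzero intertwiner forces $g$ to normalize the relevant facet and thus to descend to an intertwiner between the Deligne-Lusztig representation $\kappa(S,\theta)$ and one of its conjugates. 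Cuspidality of $\kappa(S,\theta)$, which holds because $\mathtt{S}'$ is an elliptic maximal $\mathfrak{f}$-torus, already confines such $g$ to the normalizer of $\mathtt{S}'$, and the regularity hypothesis, namely triviality of the stabilizer of $\theta\mid_{S(F)_{0}}$ in $N(S(F),G(F))/S(F)$, then forces $g\in K$.

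Supercuspidality is then automatic: once irreducibility is known, a representation compactly induced from an open subgroup compact modulo $Z(F)$ has matrix coefficients that are compactly supported modulo the center, which is the defining property of a supercuspidal representation. For exhaustion, I would invoke the Moy-Prasad classification of depth-zero supercuspidal representations: each such representation is isomorphic to the compact induction of an irreducible representation of the stabilizer of a vertex whose restriction to the parahoric $G(F)_{x,0}$ is the inflation of a cuspidal representation of $\mathtt{G}_{x}(\mathfrak{f})$. The content of the regular case is that these cuspidal representations are exactly the Deligne-Lusztig representations $\pm R_{\mathtt{S}'}^{\bar{\theta}}$ attached to elliptic maximal tori $\mathtt{S}'$ and characters $\bar{\theta}$ in general position, and that such finite data lift to a pair $(S,\theta)$ with $\theta$ regular in the sense above.

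The main obstacle will be the exhaustion step: matching the finite-level datum $(\mathtt{S}',\bar{\theta})$ to a $p$-adic elliptic maximal torus $S$ and a depth-zero character $\theta$, and verifying that regularity of $\bar{\theta}$ at the finite level corresponds precisely to triviality of the stabilizer in $N(S(F),G(F))/S(F)$. This requires Kaletha's dictionary relating rational conjugacy classes of elliptic maximal tori of $G$ to those of the finite reductive quotients, together with a careful comparison of the two notions of regularity; once that correspondence is in place, both halves of the lemma follow.
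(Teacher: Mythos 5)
The paper offers no proof of this lemma: it is imported verbatim from Kaletha \cite[Lemmas 3.17, 3.22]{Kal2016}, so there is nothing internal to compare against. Your outline is essentially the standard argument of the cited source (and of Moy--Prasad before it): irreducibility of $\ona{c-Ind}_{G(F)_{x}}^{G(F)}\tilde{\kappa}$ via vanishing of intertwining outside $G(F)_{x}$, supercuspidality from compactness of matrix coefficients modulo the center, and exhaustion from the Moy--Prasad classification of depth-zero supercuspidals together with Kaletha's lifting of the finite datum $(\mathtt{S}',\bar{\theta})$ to an unramified elliptic pair $(S,\theta)$. One point of precision worth fixing: in the intertwining step, cuspidality of $\kappa(S,\theta)$ (a consequence of ellipticity of $\mathtt{S}'$) already forces any intertwining element into the full stabilizer $G(F)_{x}$ --- not merely into the normalizer of $\mathtt{S}'$ --- because a nonzero intertwiner for $g\notin G(F)_{x}$ would produce a vector fixed by the unipotent radical of a proper parabolic of $\mathtt{G}_{x}$. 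The regularity (general position) of $\bar{\theta}$ is what guarantees that the virtual character $\pm R_{\mathtt{S}'}^{\bar{\theta}}$ is an irreducible representation in the first place, rather than what confines the intertwining set; with that division of labor corrected, your sketch matches the cited proof.
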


\subsection{\label{sub:Sign}Sign}

Choose a system of positive roots $\Phi^{+}(G,S)$ for the set of
roots $\Phi(G,S)$. Let $\check{\rho}$ denote half the sum of positive
roots and let $t$ denote the element $2\check{\rho}(-1)\in$ $Z(F)$
(see \cite[Sec. 8.5]{GR10}). 
\begin{thm}
\label{thm:main}Let $\pi$ be a generic regular depth-zero self-dual
supercuspidal representation of $G(F)$. Then $\ona{sgn}(\pi)=\omega_{\pi}(t)$.\end{thm}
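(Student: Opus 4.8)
The plan is to reduce the $p$-adic statement to the finite-field statement (Theorem~\ref{Thm:Prasad}) via the explicit construction recalled in Section~\ref{sub:Construction}. Since $\pi = \pi(S,\theta) = \ona{c-Ind}_{G(F)_x}^{G(F)} \tilde\kappa(S,\theta)$ by Lemma~\ref{lem:kal}, and induction from an open subgroup preserves the relevant structure, I would first observe that the self-duality and the Frobenius--Schur sign of $\pi$ are governed by those of the inducing representation $\tilde\kappa(S,\theta)$ on $G(F)_x$, which in turn is controlled by the Deligne--Lusztig cuspidal representation $\kappa(S,\theta)$ of the \emph{finite} reductive group $\mathtt{G}_x(\mathfrak{f})$. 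Concretely, I expect a matching of the three invariants: self-duality of $\pi$ corresponds to self-duality of $\kappa(S,\theta)$, the sign $\ona{sgn}(\pi)$ equals $\ona{sgn}(\kappa(S,\theta))$, and the central character values are compatible under the maps relating $Z(F)$ to $\mathtt{Z}_x(\mathfrak{f})$.

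The key enabling input is genericity. By a result of DeBacker and Reeder, genericity of $\pi$ forces the finite reductive group $\mathtt{G}_x$ appearing in the inducing datum to have the \emph{same root system} as $G$; this is precisely the hypothesis needed so that the distinguished element $t = 2\check\rho(-1) \in Z(F)$ on the $p$-adic side corresponds exactly to the element $t \in \mathtt{Z}_x(\mathfrak{f})$ defined via $\chi \mapsto (-1)^{\langle \chi, 2\check\rho\rangle}$ on the finite side. So the second step is to invoke this to identify the half-sum-of-positive-coroots data across the two settings and to verify that the image of $t \in Z(F)$ in $\mathtt{Z}_x(\mathfrak{f})$ under the reduction $Z(F) \to G(F)_{x,0:0+} \supseteq \mathtt{Z}_x(\mathfrak{f})$ is the finite-field $t$. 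Granting this, $\omega_\pi(t)$ on the $G(F)$ side is computed by how $\tilde\kappa(S,\theta)$ acts, which factors through $\omega_{\kappa(S,\theta)}(t)$ on the finite side.

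The third step is to apply Theorem~\ref{Thm:Prasad}: since $\kappa(S,\theta)$ is a generic self-dual representation of $\mathtt{G}_x(\mathfrak{f})$ (with $\mathtt{Z}_x$ connected, inherited from the connected center hypothesis on $G$), we get $\ona{sgn}(\kappa(S,\theta)) = \omega_{\kappa(S,\theta)}(t)$. Combined with the reduction $\ona{sgn}(\pi) = \ona{sgn}(\kappa(S,\theta))$ and the central-character matching $\omega_\pi(t) = \omega_{\kappa(S,\theta)}(t)$, this yields $\ona{sgn}(\pi) = \omega_\pi(t)$.

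The main obstacle I anticipate is the first reduction: carefully justifying that $\ona{sgn}(\pi) = \ona{sgn}(\kappa(S,\theta))$ and that self-duality descends correctly through compact induction and through the extension from $G(F)_{x,0}$ to $G(F)_x$. One must track the invariant bilinear form through $\ona{c-Ind}$ — using that the form on the compactly induced representation restricts to one on the inducing representation, and using Kaletha's explicit description of $\tilde\kappa(S,\theta)$ (in particular how $Z(F)$ acts) to ensure the extension does not alter the symmetry type. A subtle point is that genericity of $\pi$ must be shown to transfer to genericity of the Deligne--Lusztig representation $\kappa(S,\theta)$ in the precise sense required by Theorem~\ref{Thm:Prasad}, which is again where the DeBacker--Reeder root-system comparison does the essential work.
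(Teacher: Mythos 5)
Your proposal follows essentially the same route as the paper: reduce to the finite reductive group $\mathtt{G}_x$ via the compact-induction construction, use the DeBacker--Reeder genericity result to identify the root systems (the vertex $x$ is hyperspecial) and hence the elements $t$ and $\underline{t}$, and then apply Theorem \ref{Thm:Prasad}, with the central-character matching supplied by $\omega_{\pi}=\theta\mid_{Z(F)}$. The one obstacle you flag --- transferring self-duality and the sign through $\ona{c-Ind}$ --- is handled in the paper by noting that $G(F)_x$ is its own normalizer (Yu) and invoking Hakim--Murnaghan's criterion for isomorphism of compactly induced representations, which reduces everything to $\tilde{\kappa}(S,\theta)$ exactly as you anticipate.
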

\begin{proof}
By Lemma \ref{lem:kal}, the representation $\pi$ arises out of a
pair $(S,\theta)$ as in Section \ref{sub:Construction}. Let $x\in\mathcal{B}^{\ona{red}}(G,F)$
be the vertex associated to $S$. The normalizer in $G(F)$ of $G(F)_{x,0}$
is $G(F)_{x}$ \cite[Lemma 3.3]{Yu01}. Therefore $G(F)_{x}$ is self
normalizer. Since $\check{\pi}=\ona{c-Ind}_{G(F)_{x}}^{G(F)}\check{\tilde{\kappa}}$,
$\pi\cong\check{\pi}$ iff there exists $g\in G(F)$ such that $(G(F)_{x},\tilde{\kappa})$
is conjugate by an element $g\in G(F)$ to the pair $(G(F)_{x},\check{\tilde{\kappa}})$
(by \cite[Theorem 6.7 ]{HM08} for instance without any hypothesis).
But then $g\in G(F)_{x}$ since $G(F)$ is self normalizer. Therefore,
$\pi(S,\theta)$ is self-dual iff $\tilde{\kappa}(S,\theta)$ is so.
Thus, $\ona{sgn}(\pi)=\ona{sgn}(\tilde{\kappa})$. Since $\pi(S,\theta)$
is generic, the vertex $x$ associated to $S$ is hyperspecial \cite[Theorem 1.1]{DeRe10}
(also \cite[Lemma 6.1.2]{DeRe09}). Therefore the root system $\Phi(G,S)$
can be identified with $\Phi(\mathtt{G}_{x},\mathtt{S}^{\prime})$.
Let $\Phi^{+}(\mathtt{G}_{x},\mathtt{S}^{\prime})$ be the positive
roots of $\Phi(\mathtt{G}_{x},\mathtt{S}^{\prime})$ under the identification.
Let $\underline{\check{\rho}}$ be half the sum of positive roots
of $\Phi(\mathtt{G}_{x},\mathtt{S}^{\prime})$ and $\underline{t}=2\underline{\check{\rho}}(-1)\in\mathtt{Z}_{x}(\mathbb{F}_{q})$.
Since $x$ is hyperspecial, $\mathtt{Z}$ is connected implies $\mathtt{Z}_{x}$
is connected. Also, $\kappa(S,\theta)$ is generic \cite[Lemma 6.1.2]{DeRe09}.
We therefore have by Theorem \ref{Thm:Prasad} that $\ona{sgn}(\tilde{\kappa})=\omega_{\tilde{\kappa}}(\underline{t})$.
But $\omega_{\tilde{\kappa}}(\underline{t})=\bar{\theta}(\underline{t})=\theta(t)$.
The Theorem now follows because $\omega_{\pi}=\theta\mid_{Z(F)}$
by \cite[Fact 3.38]{Kal2016}.
\end{proof}

\section{\label{sec:Existence}Existence of self-dual representations}

Let $G$ be a quasi-split tamely ramified connected reductive group
over a $p$-adic field $F$. Let $\Omega(S,G)$ be the absolute weyl
group. In \cite[Sec. 3.2.1]{kaletha}, a vertex $x\in\mathcal{B}^{\ona{red}}(G,F)$
is called \textit{superspecial} if it is special in $\mathcal{B}^{\ona{red}}(G,E)$,
where $E$ is any finite Galois extension of $F$ splitting $G$. 
\begin{thm}
\label{thm:exist}If $-1\in\Omega(S,G)(F)$, then self-dual supercuspidal
representations of $G(F)$ exist. \end{thm}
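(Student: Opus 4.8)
The plan is to construct a self-dual \emph{regular depth-zero} supercuspidal representation and to reduce its self-duality to a question over the residue field, where Lang's theorem removes an obstruction that is genuinely present over $F$. First I would fix a superspecial vertex $x\in\mathcal B^{\ona{red}}(G,F)$, which exists because $G$ is quasi-split and tamely ramified. By the defining property of superspecial vertices the reductive quotient $\mathtt G_x$ has the same absolute root system as $G$, so the hypothesis $-1\in\Omega(S,G)(F)$ gives $-1\in\Omega(\mathtt S,\mathtt G_x)$ for any maximal $\mathfrak f$-torus $\mathtt S$ of $\mathtt G_x$. I would take $\mathtt S$ elliptic and lift it to an unramified elliptic maximal torus $S$ of $G$ with associated vertex $x$, and then choose a depth-zero regular character $\theta\colon S(F)\to\mathbb C^{\times}$ with $\theta^{2}\!\mid_{Z(F)}=1$; the latter constraint only pins down the central part of $\theta$ and is in any case forced by self-duality, so a regular such $\theta$ exists. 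Kaletha's construction (Section \ref{sub:Construction}, Lemma \ref{lem:kal}) then attaches to $(S,\theta)$ an irreducible regular depth-zero supercuspidal $\pi=\pi(S,\theta)$.

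Next I would reduce self-duality of $\pi$ to the finite level, reusing the argument in the proof of Theorem \ref{thm:main}. Since $G(F)_{x}$ is self-normalizing \cite[Lemma 3.3]{Yu01} and $\check\pi=\ona{c-Ind}_{G(F)_{x}}^{G(F)}\check{\tilde\kappa}$, the result \cite[Theorem 6.7]{HM08} (which holds without hypothesis) shows that $\pi$ is self-dual iff the extended inducing representation $\tilde\kappa(S,\theta)$ is self-dual. Because the central characters are governed by $\theta\!\mid_{Z(F)}$ and we arranged $\theta^{2}\!\mid_{Z(F)}=1$, the representation $\tilde\kappa$ is self-dual precisely when the inflated Deligne--Lusztig representation $\kappa(\mathtt S,\bar\theta)$ of $\mathtt G_{x}(\mathfrak f)$ is self-dual.

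It remains to establish $\kappa(\mathtt S,\bar\theta)\cong\check\kappa(\mathtt S,\bar\theta)$. Using the standard identification $\check\kappa(\mathtt S,\bar\theta)\cong\kappa(\mathtt S,\bar\theta^{-1})$ of the contragredient of a cuspidal Deligne--Lusztig representation, this amounts to the $\mathtt G_{x}(\mathfrak f)$-conjugacy of the pairs $(\mathtt S,\bar\theta)$ and $(\mathtt S,\bar\theta^{-1})$. Here the hypothesis enters: the element $-1\in\Omega(\mathtt S,\mathtt G_{x})$ acts on $\mathtt S$ by inversion on the derived part and trivially on the central torus, so the relation $\bar\theta^{2}\!\mid_{\mathtt Z_{x}}=1$ inherited from $\theta$ guarantees that $-1$ carries $\bar\theta$ to $\bar\theta^{-1}$. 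By Lang's theorem $\ona{H}^{1}(\mathfrak f,\mathtt S)=1$, so the short exact sequence $1\to\mathtt S\to N(\mathtt S,\mathtt G_{x})\to\Omega(\mathtt S,\mathtt G_{x})\to 1$ shows that $-1$ lifts to an $\mathfrak f$-point $\bar n$ of the normalizer; conjugation by $\bar n$ realizes the required conjugacy, whence $\kappa$, and therefore $\pi$, is self-dual.

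The crux is exactly this lifting step, and the reason for routing the proof through the vertex $x$ is that it \emph{cannot} be carried out directly over $F$: the element $-1\in\Omega(S,G)(F)$ need not lift to $N(S,G)(F)$, the obstruction living in $\ona{H}^{1}(F,S)$, which is typically nontrivial for a $p$-adic field (already for an elliptic torus in $SL_{2}$ the lift exists precisely when $-1$ is a norm from the associated quadratic extension). Passing to the superspecial vertex replaces this by an obstruction over the finite residue field, which Lang's theorem annihilates; this is the single essential idea of the argument.
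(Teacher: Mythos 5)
Your overall strategy---realize a self-dual representation from a pair $(S,\theta)$ attached to a superspecial vertex and show that a Weyl element carries $(S,\theta)$ to $(S,\theta^{-1})$---matches the paper's, but you execute the key step differently and misidentify where the difficulty lies. The paper takes a general tame regular elliptic pair $(S,\theta)$ with $S$ relatively unramified and $x$ superspecial, quotes $\check{\pi}(S,\theta)\cong\pi(S,\theta^{-1})$ from Hakim--Murnaghan, reduces self-duality to $G(F)$-conjugacy of $(S,\theta)$ and $(S,\theta^{-1})$ via \cite[Lemma 3.37]{Kal2016}, and then invokes \cite[Lemma 3.11]{Kal2016}: for precisely this class of tori the map $N(S,G)(F)/S(F)\to\Omega(S,G)(F)$ is an isomorphism. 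So the ``crux'' you single out---that $-1$ cannot be lifted to $N(S,G)(F)$ because of an obstruction in $\ona{H}^{1}(F,S)$---is not present for the tori actually in play; relative unramifiedness and superspecialness of $x$ are imposed exactly so that the lift exists over $F$ itself. (The proof of Kaletha's lemma does pass through the reductive quotient and Lang's theorem, so your mechanism is the right one under the hood, but it is available as a statement over $F$ and the descent to $\kappa$ is unnecessary.)

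The genuine gaps in your version are two. First, by insisting on a depth-zero representation built as in Section \ref{sub:Construction}, you are implicitly working in the setting of Section \ref{sec:Regular}, where $G$ is unramified with connected center: for a general quasi-split tamely ramified $G$ an unramified elliptic maximal torus attached to a superspecial vertex need not exist, the reductive quotient $\mathtt{G}_x$ need not have the same absolute root system as $G$, and $G(F)_x$ need not equal $Z(F)G(F)_{x,0}$, so your reduction ``$\pi$ self-dual iff $\kappa$ self-dual'' is not available in the stated generality. Second, your treatment of the central condition is too quick: $S$ is not the direct product of a ``derived part'' and the central torus, so $\theta^{2}\mid_{Z(F)}=1$ does not by itself yield $\theta\circ\ona{Ad}(n)=\theta^{-1}$; since $n^{-1}sn\cdot s$ lies in $Z(F)$ for every $s\in S(F)$, what is needed is that $\theta$ kill the subgroup $\{\,n^{-1}sn\cdot s : s\in S(F)\,\}$ of $Z(F)$, which contains but may be larger than the squares. (The paper's own proof is also silent on this last point, so a fully complete argument must in any case choose $\theta$ with this property.) If you restrict to unramified $G$ with connected center, your route through the finite field is a correct and essentially self-contained alternative; as a proof of Theorem \ref{thm:exist} as stated it does not go through.
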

\begin{proof}
Let $(S,\theta)$ be a tame regular elliptic pair \cite[Def. 3.23]{Kal2016}
such that $S$ is \textit{relatively unramified} \cite[Sec. 3.2.1]{Kal2016}
and the point $x\in\mathcal{B}^{\ona{red}}(G,F)$ associated to $S$
is superspecial. Let $\pi(S,\theta)$ be the associated regular supercuspidal
representation. By \cite[Theorem 4.25]{HM08}, $\check{\pi}(S,\theta)\cong\pi(S,\theta^{-1})$.
By \cite[Lemma 3.37]{Kal2016}, $\pi(S,\theta)\cong\pi(S,\theta^{-1})$
iff $(S,\theta)$ is $G(F)$-conjugate to $(S,\theta^{-1})$. By \cite[Lemma 3.11]{Kal2016}
$\Omega(S,G)(F)\cong N(S,G)(F)/S(F)$, where $N(S,G)$ denotes the
normalizer of $S$ in $G$. Thus if $-1\in\Omega(S,G)(F)$, then it
follows that $(S,\theta)$ is $G(F)$-conjugate to $(S,\theta^{-1})$. \end{proof}
\begin{rem}
If the root system of $G$ is of type $B_{n}$, $C_{n}$, $E_{7}$,
$E_{8}$, $G_{2}$ or $D_{n}$ ($n$-even), then the longest weyl
group element of $G$ is $-1$. 
\end{rem}

\begin{rem}
When $G=\ona{GL}_{n}$, Adler \cite{Adler97} showed that the necessary
and sufficient condition for self-dual regular supercuspidal representations
of $G(F)$ to exist is that either $n$ or the residue characteristic
of $F$ be even. 
\end{rem}

\section{Acknowledgment}

The author is very thankful to Sandeep Varma, Dipendra Prasad and
Steven Spallone for many helpful conversations. 

\bibliographystyle{plain}
\bibliography{summary}

\end{document}